\newtheorem{theorem}{Theorem}[section]
\theoremstyle{plain}
\newtheorem{conjecture}[theorem]{Conjecture}
\newtheorem{lemma}[theorem]{Lemma}
\newtheorem{corollary}[theorem]{Corollary}
\newtheorem{problem}[theorem]{Problem}
\theoremstyle{definition}
\newtheorem{example}[theorem]{Example}
\begin{document}

\author{Klas Markstr\"om}

\title{Two questions of Erd\H{o}s on hypergraphs above the Tur{\'a}n threshold}

\maketitle

\begin{abstract}
	For ordinary graphs it is known that any graph $G$ with more edges than the Tur{\'a}n number of $K_s$ must contain several copies 
	of  $K_s$, and a copy of $K_{s+1}^-$, the complete graph on $s+1$ vertices with one missing edge. Erd\H{o}s asked if the 
	same result is true for $K^3_s$, the complete 3-uniform hypergraph on $s$ vertices.  
	
	In this note we show that for small values of $n$, the number of vertices in $G$, the answer is negative for $s=4$. For the second 
	property, that of containing a ${K^3_{s+1}}^-$, we show that for $s=4$ the answer is negative for all large $n$ as well, by proving that the 
	Tur{\'a}n density of  ${K^3_5}^-$ is greater than that of $K^3_4$. 
\end{abstract}

\section{Introduction}
One of the cornerstones of modern graph theory is Tur{\'a}n's theorem, which gives the maximum number of edges in a 
graph on $n$ vertices with no complete subgraphs of order $s$. This theorem has been extended in a number of different 
directions and we now have a good understanding of the corresponding question for other forbidden subgraphs, as long as 
they are not bipartite, and the structure of graphs close to the Tur{\'a}n threshold.

Recall that the Tur{\'a}n number $t(n,s,k)$ is the maximum number of edges in a $k$-uniform hypergraph on $n$ vertices 
which does not have the complete $k$-uniform hypergraph on $s$ vertices as a subgraph.  Rademacher proved, but did not 
publish, see \cite{EP62}  that any graph with $t(n,3,2)+1$ edges contains at least $n/2$ triangles. Erd\H{o}s later \cite{EP62} published a proof 
of this result and extended it by proving that for $0\leq q \leq c_1 n/2 $, for some constant $c_1$, any graph with $t(n,3,2)+q$ 
edges  contains at least $qn/2$ triangles. This result was later extended to graphs with density higher than $t(n,3,2)/{n \choose 2}$, 
recently culminating in \cite{Raz08} where the minimal number of triangles was determined for all densities.

Erd\H{os} and Dirac also observed that any graph with  $t(n,s,2)+1$ edges has $K_s^-$, 
the complete graph with one edge removed, as a subgraph. This provides a strengthening of the  result for triangles from \cite{EP62} in 
the sense that it shows that the graph contains two copies of $K_3$ which share two vertices.

In \cite{EP94} Erd\H{o}s asked if these results could be generalized to 3-uniform hypergraphs as well
\begin{problem}\label{pr1}
	Does every 3-uniform hypergraph on $t(n,s,3)+1$ edges contain two $K_s^3$?
\end{problem}
\begin{problem}\label{pr2}
	Does every 3-uniform hypergraph on $t(n,s,3)+1$ edges contain $K_{s+1}^{3-}$?
\end{problem}
In \cite{CG} these questions,  with an affirmative answer to both,  were formulated as conjectures,

The aim of this note is to show that the answer to both questions is no. For the first question we believe that the answer is yes for sufficiently 
large $n$, but as we shall see the second question fails even in an asymptotic sense.

\section{The examples}
We first construct a family of hypergraphs which give a negative answer to the two questions for certain small values of $n$. 
\begin{example}
	The $k$-uniform hypergraph $H_{k}$ is the hypergraph with vertex set $V=[2k-1]$ and edge set 
	$E={ [2k-1] \choose k}\setminus\{\{1,2,\ldots,k\},\{1,k+1,k+2,\ldots,2k-1\}\}$
\end{example}
We now have
\begin{theorem}
	Let $H_k$ be define as in the example.
	\begin{enumerate}
		\item $H_k$ does not contain two $K^k_{2k-2}$
		\item $|E(H_k)|\geq t(2k-1,2k-2,k)+1$
	\end{enumerate}
\end{theorem}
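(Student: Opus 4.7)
Both parts rest on a single observation. Since $H_k$ has exactly $2k-1$ vertices, a copy of $K^k_{2k-2}$ inside any $k$-uniform hypergraph $H$ on $[2k-1]$ is just a set of the form $V\setminus\{v\}$ all of whose $k$-subsets are edges, and this happens precisely when every non-edge of $H$ contains the vertex $v$. Consequently, the number of copies of $K^k_{2k-2}$ in $H$ equals the number of vertices common to every non-edge.

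For part (1), I would simply apply this to $H_k$: the two missing edges are $M_1=\{1,2,\ldots,k\}$ and $M_2=\{1,k+1,\ldots,2k-1\}$, which intersect only in $\{1\}$. Hence $v=1$ is the unique deletion producing a clique, so $H_k$ contains exactly one copy of $K^k_{2k-2}$ and, in particular, not two.

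For part (2), the equivalent statement is that every $k$-uniform hypergraph on $[2k-1]$ missing at most two edges must contain some $K^k_{2k-2}$; this gives $t(2k-1,2k-2,k)\le\binom{2k-1}{k}-3=|E(H_k)|-1$. If there is at most one missing edge $M$, then any $v\in M$ (or any vertex if $H$ is complete) works. If there are exactly two non-edges $M_1,M_2$, then by pigeonhole $M_1\cap M_2\neq\emptyset$ since $|M_1|+|M_2|=2k>2k-1$, and any vertex in the intersection again yields the desired clique.

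No serious obstacle remains: everything reduces to the inequality $2k>2k-1$, which forces any two $k$-subsets of $[2k-1]$ to share a vertex, while the rest is the bookkeeping set up in the first paragraph.
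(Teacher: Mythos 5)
Your proof of part (1) is essentially the same as the paper's: both hinge on the fact that the two non-edges $M_1$ and $M_2$ meet only in vertex $1$, so that $V\setminus\{1\}$ is the unique candidate for a $K^k_{2k-2}$; you phrase it as a count (exactly one copy), the paper as a contradiction from assuming two distinct clique subsets. For part (2), however, you take a genuinely different and arguably cleaner route. The paper bounds $t(2k-1,2k-2,k)$ by averaging edge counts over all $2k-1$ subsets of size $2k-2$, obtaining $t(2k-1,2k-2,k)\le\binom{2k-1}{k}-\frac{2k-1}{k-1}$, and then uses $\frac{2k-1}{k-1}>2$ and integrality to conclude $t(2k-1,2k-2,k)\le\binom{2k-1}{k}-3$. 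You instead observe directly that any $k$-graph on $[2k-1]$ with at most two non-edges must contain a $K^k_{2k-2}$: since $2k>2k-1$, any two $k$-subsets of $[2k-1]$ intersect, so deleting a common vertex leaves a clique on $2k-2$ vertices. Both arguments yield the same bound $\binom{2k-1}{k}-3$, and in fact for $k\ge 3$ the averaging gives nothing sharper. Your version avoids averaging and a rounding step, is entirely elementary, and reuses the same structural observation (non-edges in $[2k-1]$ must share a vertex) that drives part (1), making the two halves of the proof more unified.
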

\begin{proof}
	\begin{enumerate}
		\item The vertex set of any $K^k_{2k-2}$ in $H_k$ cannot have one of the two non-edges as a subset. Let $A$ and $B$ be subsets of 
		$V(H_k)$ of size $2k-2$. Since any vertex subset of size $2k-2$ misses one vertex  of $H_k$ at least one of $A$ and $B$ must 
		have one of the two non-edges as a subset.
		
		\item Since every subset of $V(H_kj)$ of size $2k-2$ can have at most ${2k-2 \choose k}-1$ edges, we find by averaging that 
		\begin{equation} \label{eq1}
			t(2k-1,2k-2,k)\leq \frac{{2k-2 \choose k}-1}{{2k-2 \choose k-2}}{2k-1 \choose k} = {2k -1 \choose k } - \frac{2k-1}{k-1}
		\end{equation} 	
		The number of edges in $H_k$ is ${2k-1 \choose k}-2$ and Inequality \ref{eq1} shows that, since $\frac{2k-1}{k-1} > 2$, the Tur{\'a}n 
		number $t(2k-1,2k-2,k)$ is at most  ${2k-1 \choose k}-3$.  
				
	\end{enumerate}

\end{proof}
\begin{corollary}
	$H_3$ gives a negative answer to Problem \ref{pr1} and since there are two non-edges in $H_k$ it does not contain a $K_{5}^{3-}$ either, thereby giving 
	a negative answer to Problem \ref{pr2} as well.  
\end{corollary}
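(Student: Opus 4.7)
The plan is to extract both negative answers directly from the preceding theorem applied with $k=3$, so essentially no new combinatorial work is required---only a careful bookkeeping of edge counts on $5$ vertices.

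For Problem \ref{pr1} with $s=4$ and $n=5$, I would instantiate the theorem at $k=3$. Part (2) of the theorem then says $|E(H_3)|\geq t(5,4,3)+1$, so $H_3$ meets the edge hypothesis of Problem \ref{pr1}. Part (1) says $H_3$ does not contain two copies of $K^3_{2k-2}=K^3_4$. Together these two facts give exactly the failure asserted by Problem \ref{pr1}, so I simply remark that $H_3$ is a witness.

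For Problem \ref{pr2} with $s=4$ I would argue by a crude edge count. The hypergraph $K_5^{3-}$ has $5$ vertices and $\binom{5}{3}-1=9$ edges. On the other hand, by construction $H_3$ has vertex set $[5]$ and exactly $\binom{5}{3}-2=8$ edges, since the two sets $\{1,2,3\}$ and $\{1,4,5\}$ were deleted. Any copy of $K_5^{3-}$ inside $H_3$ would need to sit on some $5$-element subset of $V(H_3)$; since $V(H_3)$ itself has only $5$ vertices, the only candidate set is all of $[5]$, and that candidate contains only $8$ of the $9$ edges required. Hence $H_3$ does not contain $K_5^{3-}$ while still having more than $t(5,4,3)$ edges, giving the negative answer to Problem \ref{pr2}.

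There is no genuine obstacle here: the theorem does all the work, and the only thing to check is that having two non-edges on $5$ vertices is incompatible with containing $K_5^{3-}$, which is a one-line edge count. I would keep the proof to a few sentences and emphasize that it is exactly the presence of \emph{two} non-edges, rather than one, that kills $K_5^{3-}$.
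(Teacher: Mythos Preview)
Your proposal is correct and follows exactly the same reasoning as the paper: the theorem with $k=3$ handles Problem~\ref{pr1}, and the observation that two non-edges on five vertices preclude a $K_5^{3-}$ (which is just your edge count $\binom{5}{3}-2=8<9$) handles Problem~\ref{pr2}. The paper does not give a separate proof of this corollary---the justification is embedded in the statement itself---so your write-up is simply a faithful expansion of that one-line remark.
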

Note that $H_k$ also provides a negative answer to a generalisation of both problems to $k$-graphs for $k\geq 3$. However we believe that Question 1 
should have a positive answer for sufficiently large values of $n$
\begin{conjecture}
	There is an $n_0(s)$ such that every 3-uniform hypergraph on $n\geq n_0(s)$ vertices and  $t(n,s,3)+1$ edges contains two $K_s^3$
\end{conjecture}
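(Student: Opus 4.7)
The natural strategy is to mimic the graph-theoretic argument of Rademacher--Erd\H{o}s, where a single edge above the Tur\'an threshold forces many copies of $K_s$. The ingredient that makes the graph proof work is the Erd\H{o}s--Simonovits stability theorem, which pins any near-extremal $K_s$-free graph to within $o(n^2)$ of the Tur\'an graph $T(n,s-1)$. The plan is therefore two-stage: establish an analogous stability statement for $K_s^3$-free 3-graphs, and then perform a local counting argument to show that adding a single edge to a near-extremal $K_s^3$-free 3-graph creates at least two copies of $K_s^3$.

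First, I would try to prove (or assume for now) a stability theorem: any $K_s^3$-free 3-graph $H$ on $n$ vertices with $|E(H)|\geq t(n,s,3)-o(n^3)$ can be turned into an extremal $K_s^3$-free 3-graph by changing $o(n^3)$ edges. For $s=4$ partial stability statements toward the conjectured extremal family are known in the literature, and some form of them may be enough. Second, given a hypergraph $H$ on $n\geq n_0(s)$ vertices with exactly $t(n,s,3)+1$ edges, fix any copy $K$ of $K_s^3$ in $H$ (one exists by the Tur\'an bound), pick an edge $e\subseteq K$, and look at $H'=H-e$. If $H$ had a unique copy of $K_s^3$, then $H'$ is $K_s^3$-free with $t(n,s,3)$ edges, hence extremal; stability then places $H'$ within $o(n^3)$ edges of an extremal configuration. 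One then needs to show, in that extremal configuration, that any single missing triple completes at least two copies of $K_s^3$, giving a contradiction.

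The main obstacle, and the reason I only offer a conjecture, is that for $s\geq 4$ the extremal $K_s^3$-free 3-graphs are not known. Even the Tur\'an density of $K_4^3$ is open; the Frankl--F\"uredi conjecture identifies a recursive extremal construction, but this has never been verified. Without knowledge of the extremal family, both steps above have to be carried out blindly: one needs a stability theorem that constrains the structure enough to count, and a combinatorial lemma guaranteeing that every non-edge of an extremal 3-graph lies in at least two potential copies of $K_s^3$. For the conjectured $K_4^3$-extremal construction the latter can be checked by inspection, so a proof of the conjecture for $s=4$ would follow from a suitably strong stability version of the Frankl--F\"uredi conjecture.

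An alternative route that avoids identifying the extremal structure would be a Zykov-style symmetrization or a weight-shifting argument directly on the 3-graph, in the spirit of Keevash's flag-algebra-flavoured perturbation arguments. Here one would attempt to show that any 3-graph with $t(n,s,3)+1$ edges and only one copy of $K_s^3$ can be locally modified, without decreasing the edge count, into a $K_s^3$-free 3-graph on $n$ vertices with more than $t(n,s,3)$ edges, a contradiction. Designing the right local modification is the hard part, since in 3-graphs the basic symmetrization operations do not preserve the number of edges as cleanly as in the graph case.
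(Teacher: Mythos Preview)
The statement you were asked to prove is labelled a \emph{Conjecture} in the paper, and the paper offers no proof of it; the author simply records it as a belief after showing that Problem~\ref{pr1} fails for small $n$. So there is no argument in the paper to compare your proposal against.

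Your write-up is honest about this: you do not claim a proof, only a strategy, and you correctly isolate the two obstructions. The stability-plus-local-counting outline is the natural line of attack and is internally coherent: if $H$ had a unique $K_s^3$, deleting one of its edges yields an exactly extremal $K_s^3$-free $3$-graph $H'$, and the question reduces to whether every non-edge of an extremal configuration completes at least two copies of $K_s^3$. Two remarks are worth making. First, since $H'$ is not merely near-extremal but exactly extremal, what you really need is a \emph{classification} of the extremal $3$-graphs rather than a stability statement; stability alone leaves $o(n^3)$ slack, which is far too coarse to control the effect of a single added edge. Second, even for the conjectured $K_4^3$-extremal family the ``every non-edge lies in at least two potential $K_4^3$'s'' claim is not automatic, because there are exponentially many non-isomorphic conjectured extremal constructions (Kostochka), and one would have to verify the claim uniformly across all of them. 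Your alternative symmetrization idea faces the same fundamental barrier: without knowing $t(n,s,3)$ or the extremal family, there is no target structure to symmetrize toward. In short, your proposal is a reasonable research plan, but it is contingent on resolving the Tur\'an problem for $K_s^3$, which is exactly why the paper leaves the statement as a conjecture.
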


For problem \ref{pr2} the failure is not a small $n$ phenomenon, as our next theorem will imply. Recall that the Tur{\'a}n density $\pi(G)$ of a $k$-uniform 
hypergraph $G$ is 
$$\pi(G)=\lim_{n\rightarrow \infty}\frac{t(n,G)}{{n \choose k}},$$
where $t(n,G)$ is the maximum number of edges in a $k$-uniform hypergraph on $n$ vertices which does not have $G$ as a subgraph.
We note that a positive answer to Problem \ref{pr2} requires that $\pi(K_s^3)=\pi(K_{s+1}^{3-})$. For $K_5^{3-}$ we have the following lower bound,
\begin{theorem}
	  $\pi(K_5^{3-}) \geq \frac{46}{81}$
\end{theorem}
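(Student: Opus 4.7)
The plan is to construct an explicit $K_5^{3-}$-free $3$-uniform hypergraph on $n$ vertices with density approaching $46/81$. Since any $K_4^3$ sits inside a $K_5^{3-}$, every $K_4^3$-free hypergraph is automatically $K_5^{3-}$-free, and the classical cyclic $3$-partite construction used for the bound $\pi(K_4^3)\geq 5/9$ already yields $\pi(K_5^{3-})\geq 45/81$; the extra task is to pick up one additional $1/81$ of density by adding edges inside the three parts without creating any $5$-set with nine edges.

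The construction I would use has two layers. Let $V=V_1\sqcup V_2\sqcup V_3$ with $|V_i|=n/3$ and take as outer edges (a) every triple with one vertex in each $V_i$, and (b) every triple of type ``two vertices in $V_i$, one in $V_{i+1}$'' for $i=1,2,3$ cyclically. A routine check over the four $4$-set distributions $(4,0,0),(3,1,0),(2,2,0),(2,1,1)$ confirms the outer part is $K_4^3$-free, and an edge count gives outer density $5/9$. Then inside each $V_i$ I would add an inner sub-hypergraph of density exactly $1/9$; a convenient choice is a suitably thinned $3$-partite sub-pattern on a subdivision of $V_i$, which is automatically $K_4^{3-}$-free.

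To verify the whole construction is $K_5^{3-}$-free I would case-analyse the distribution $(n_1,n_2,n_3)$ of a $5$-set across the outer parts. For every distribution except $(4,1,0)$ and its cyclic images the outer layer places at most $7$ edges on the $5$-set; the inner layer contributes only when at least three of the five vertices lie in a single $V_i$, and a short enumeration shows the total never reaches $9$. In the case $(4,1,0)$ the outer layer contributes all $6$ of its type-(b) triples on the $5$-set, so to stay $\leq 8$ the inner must place at most $2$ edges on the four $V_i$-vertices; this is precisely the $K_4^{3-}$-free condition we built in. The density computation is $5/9+3\cdot\tfrac{1}{27}\cdot\tfrac{1}{9}=\tfrac{45}{81}+\tfrac{1}{81}=\tfrac{46}{81}$.

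The main obstacle is not any single hard step but the coordination between the inner density and the $(4,1,0)$ case. As soon as the inner sub-hypergraph inside some $V_i$ contains a $K_4^{3-}$, joining those four vertices with any vertex of $V_{i+1}$ produces a $K_5^{3-}$ through the six outer type-(b) edges; this forces the inner to be $K_4^{3-}$-free, and calibrating the inner density to exactly $1/9$ then delivers the target value $46/81$.
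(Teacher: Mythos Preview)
Your argument is correct and establishes the stated bound, but it proceeds via a genuinely different construction from the paper's. The paper takes the five-vertex hypergraph $H_3$ (the complete $3$-graph on $\{1,\dots,5\}$ with the two triples $\{1,2,3\}$ and $\{1,4,5\}$ removed), blows it up with \emph{unequal} class sizes ($n/9$ for vertex~$1$, $2n/9$ for each of the others), and then adds further triples of the form $\{v_i,v_j,u_k\}$ governed by an auxiliary directed graph $D$ on the five classes; a case analysis on how a $5$-set meets the classes shows the result is $K_5^{3-}$-free with density $46/81$. Your route instead starts from the familiar cyclic $3$-partite $K_4^3$-extremal construction of density $5/9$ and decorates each part with a $K_4^{3-}$-free inner hypergraph; the crucial observation, which you identify correctly, is that the only dangerous profile is $(4,1,0)$ with the singleton in the ``forward'' part, where the six outer type-(b) triples force the inner $4$-set to carry at most two triples, i.e.\ to avoid $K_4^{3-}$. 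This approach is more modular and links the question directly to the Tur\'an problem for $K_4^{3-}$, whereas the paper's construction is more ad hoc but self-contained.

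One remark: you are underselling your own construction. There is no reason to thin the inner hypergraph down to density $1/9$. Taking the inner pattern on each $V_i$ to be the \emph{full} balanced tripartite $3$-graph (which is $K_4^{3-}$-free, since any four vertices span at most two transversal triples) already satisfies every case of your analysis and has inner density $2/9$; this yields overall density $5/9 + 3\cdot\tfrac{1}{27}\cdot\tfrac{2}{9} = 47/81$, strictly improving on the paper's $46/81$. More generally, any $K_4^{3-}$-free inner pattern of density $d$ gives $5/9 + d/9$, so your framework actually reduces the lower bound for $\pi(K_5^{3-})$ to the lower bound for $\pi(K_4^{3-})$.
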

\begin{proof}
	Let $H_3$ be the 3-uniform hypergraph defined in our earlier example and let $H_3(n)$, for $n$ divisible by 9, be the blow-up of $H_3$ with 
	$n/9$ copies of vertex 1 in $H_3$ and $2n/9$ copies of each of the other vertices in $H_3$. Three vertices $v_i,u_j,w_k$ form an edge in $H_3(n)$ if $\{v,u,w\}$ 
	is an edge in $H_3$. It is easy to see that $H_3(n)$ does not contain a $K_5^{3-}$ and a simple calculation shows that the number of edges in $H_3(n)$ 
	is  $(\frac{32}{81}+o(1)){n \choose 3}$
	
	Next, let $D$ be the directed graph on the same vertex set as  $H_3$ shown in Figure \ref{dir}.
	\begin{figure}[ht]
    		\begin{center}
			\includegraphics[width=0.5\textwidth]{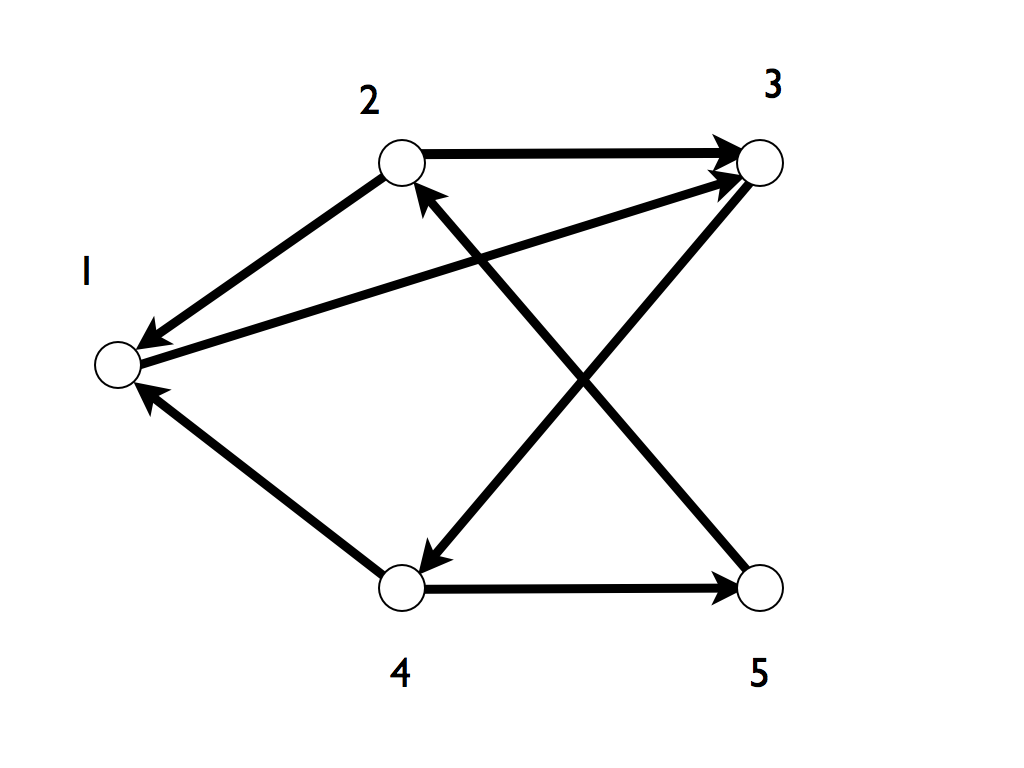}
			\caption{The auxiliary graph $D$ }\label{dir}
		\end{center}
	 \end{figure} 
	We now define a  3-uniform hypergraph $G_0(n)$ on the  same vertex set as $H_3(n)$ where  $\{v_i,v_j,u_k\}$ is an edge if, $i<j$ and 
	there is an edge from $v$ to $u$ in $D$. An edge in $D$ not incident with vertex 1 gives rise to $(\frac{8}{243}+o(1)){n \choose 3}$ edges. An edge leading to 
	vertex 1 gives rise to $(\frac{4}{243}+o(1)){n \choose 3}$ edges, and an edge leading from vertex 1 gives rise to $(\frac{2}{243}+o(1)){n \choose 3}$ edges.
	
	Finally we let $G(n)$ be the graph on the same vertex set as $H_3(n)$ with all edges from $H_3(n)$ and $G_0(n)$.  The number of edges in $G(n)$ is 
	\begin{multline}
		(\frac{32}{81}+o(1)){n \choose 3} + (\frac{2}{243}+o(1)){n \choose 3} +  2(\frac{4}{243}+o(1)){n \choose 3} +\\
		 4 (\frac{8}{243}+o(1)){n \choose 3}= (\frac{46}{81}+o(1)){n \choose 3}
	\end{multline}	
	In order to prove the theorem we now need to show that $G(n)$ does not contain $K_5^{3-}$.  
	
	Let $A$ be a set of 5 vertices in $G(n)$. If $A$ corresponds 
	to five distinct vertices in $H_3$ then $A$ is not a $K_5^{3-}$. If $A$ contains four or five vertices corresponding to the same vertex in $H_3$ then there are at 
	least 4 non-edges in $A$, and $A$ is not a  $K_5^{3-}$.  If $A$ has two vertices corresponding to the same vertex in $H_3$ and two other vertices corresponding 
	to another vertex in $H_3$ then there are at least 2 non-edges in $A$, and $A$ is not a  $K_5^{3-}$. 
	
	There are now two remaining possibilities:
	\begin{enumerate}
		\item There are three vertices in $A$ corresponding to the same vertex $v$ in $H_3$ and two vertices corresponding to two 
		other, distinct, vertices $u$ and $w$ in $D$. If there is at most one edge from $v$ to $u$ and $w$ in $D$ then there are at least 
		three non-edges in $A$, and $A$ is not a  $K_5^{3-}$. If there are two edges from $v$ to $u$ and $w$ then $\{v,u,w\}$ must be one of 
		the two non-edges in $H_3$, which means that there are at least three non-edges in $A$, and $A$ is not a  $K_5^{3-}$. 
		
		\item There are two vertices in $A$ corresponding to the same vertex $v$ in $H_3$ and three vertices corresponding to three 
		other, distinct, vertices $u$, $w$ and $z$ in $D$. If there are two edges from $v$ to $u,w$ and $z$ then $v$ together with two 
		of the other vertices are one of the non-edges in $H_3$ and there are at least two non-edges in $A$, and $A$ is not a  $K_5^{3-}$. 
		If there is only one edge from $v$ to the other three vertices there are at least two non-edges in $A$, and $A$ is not a  $K_5^{3-}$. 
	\end{enumerate}
	Hence $t(n,K_5^{3-})\geq  (\frac{46}{81}+o(1)){n \choose 3}$ and  $\pi(K_5^{3-}) \geq \frac{46}{81}$.

\end{proof}

Tur{\'a}n \cite{Tu61} posed the problem of finding the Tur{\'a}n density of the complete uniform hypergraphs and conjectured that $\pi(K^3_4)=\frac{5}{9}$, and 
gave a construction attaining this value. The conjecture remains open and the best current upper bound has been found by Razborov.  
\begin{lemma}\cite{Raz10}
	$\pi(K^3_4)\leq 0.561666$
\end{lemma}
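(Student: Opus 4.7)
The plan is to apply Razborov's flag algebra framework, which is essentially the only known technique that has produced nontrivial upper bounds for $\pi(K_4^{3})$ below the Tur\'an conjectured value's natural slack. The starting point is to encode the extremal problem as a linear optimization over the space of limit densities of $3$-uniform hypergraphs and then to certify the desired upper bound via a sum-of-squares argument in the flag algebra.

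Concretely, I would first fix a cutoff $N$ (to obtain a bound like $0.561666$, one needs $N = 7$, perhaps $8$) and enumerate, up to isomorphism, all $3$-uniform hypergraphs on at most $N$ vertices as well as all \emph{$\sigma$-flags} for each small type $\sigma$ (a type being a totally labeled $3$-graph on a few vertices). For any $K_4^{3}$-free hypergraph sequence $(H_n)$ with edge density tending to $\pi(K_4^{3})$, the subgraph densities $d(F,H_n)$ converge, and the limit functional $\phi$ satisfies $\phi(K_4^{3}) = 0$ together with all the standard linear chain identities and the averaging relations.

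The main step is to produce, for each type $\sigma$, a positive semidefinite matrix $Q_\sigma$ indexed by $\sigma$-flags such that the identity
\begin{equation*}
\tfrac{5}{9} - \rho \;=\; \sum_{F} c_F \, d(F,\cdot) \;+\; \sum_\sigma \bigl[\!\![\, \mathbf{f}_\sigma^{\top} Q_\sigma \mathbf{f}_\sigma \,]\!\!]_\sigma
\end{equation*}
holds in the flag algebra modulo the $K_4^{3}$-free relation (so all $c_F \ge 0$ on $K_4^{3}$-free $F$), with $\rho = 0.561666$. Here $[\![\cdot]\!]_\sigma$ denotes unlabeling/averaging and $\mathbf{f}_\sigma$ is the vector of $\sigma$-flags up to the chosen size. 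Evaluating at $\phi$ and using $\phi(K_4^{3}) = 0$ forces $\phi(\text{edge}) \le \rho$, hence $\pi(K_4^{3}) \le \rho$.

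Finding such certificates $(c_F, Q_\sigma)$ is a semidefinite programming problem: one writes down the symmetry-reduced SDP whose feasibility at level $\rho$ certifies $\pi(K_4^{3}) \le \rho$ and then minimizes $\rho$. I would expect the main obstacle to be twofold: first, the combinatorial enumeration of flags at level $N=7$ is already delicate and the resulting SDP is large, so exploiting the automorphisms of each type is essential to keep the computation tractable; second, a numerical SDP solution is only an approximation, and to rigorously claim $0.561666$ one must round the floating-point matrices $Q_\sigma$ to rational PSD matrices and verify the linear identity exactly, a step that requires care to ensure the perturbation does not push the bound above the target.
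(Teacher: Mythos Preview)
Your approach is exactly the one the paper invokes: the lemma is not proved in the paper but quoted from Razborov~\cite{Raz10}, whose argument is precisely the flag-algebra semidefinite certificate you describe (the paper also notes that the software of~\cite{BT11} can produce an exact, non-numerical verification). So methodologically there is nothing to compare.

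That said, your displayed identity is garbled. The left-hand side $\tfrac{5}{9}-\rho$ is a fixed \emph{negative} constant (since $\tfrac{5}{9}<0.561666$), while the right-hand side is, by construction, nonnegative on every $K_4^{3}$-free limit functional; as written the identity is impossible and certainly cannot force $\phi(\text{edge})\le\rho$, because the edge flag never appears. What you want is an identity of the form
\[
\rho\cdot\mathbf{1}\;-\;e \;=\; \sum_{F\ \text{$K_4^{3}$-free}} c_F\,F \;+\; \sum_\sigma \bigl[\!\!\bigl[\, \mathbf{f}_\sigma^{\top} Q_\sigma \mathbf{f}_\sigma \,\bigr]\!\!\bigr]_\sigma \;+\; (\text{terms vanishing on $K_4^{3}$-free $\phi$}),
\]
where $e$ is the edge flag and $c_F\ge 0$, $Q_\sigma\succeq 0$; evaluating at $\phi$ then gives $\rho-\phi(e)\ge 0$. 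The constant $\tfrac{5}{9}$ (the conjectured value of $\pi(K_4^{3})$) plays no role in the certificate. Also, Razborov's bound $0.561666$ is obtained already at cutoff $N=6$, not $7$ or $8$.
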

The derivation of this bound in \cite{Raz10} is done by the flag-algebra method and is semi-numerical, but there is now freely available software \cite{BT11} 
which will give a computer assisted but non-numerical proof, based on the flag-algebra method.

Our lower bound for $\pi(K_5^{3-})$ is $ \frac{46}{81}=0.5679\ldots$, which is larger than the upper bound for $\pi(K_4^{3})$. Hence we get our desired corollary,
\begin{corollary}
	For $s=4$ Problem \ref{pr2} has a negative answer for all sufficiently large $n$ 
\end{corollary}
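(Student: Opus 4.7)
The plan is to derive the corollary as an immediate consequence of the quantitative gap between our lower bound on $\pi(K_5^{3-})$ and Razborov's upper bound on $\pi(K_4^3)$, together with the fact that $K_5^{3-}$-freeness is preserved under edge deletion. The whole argument is really a short unpacking of the definition of Tur\'an density, so no new combinatorial machinery will be needed.

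First I would record the numerical comparison. From the preceding theorem, $\pi(K_5^{3-})\geq 46/81=0.56790\ldots$, while from the cited flag-algebra bound, $\pi(K_4^3)\leq 0.561666$. In particular there is a strictly positive gap $\delta := 46/81 - 0.561666 > 0$. Next I would fix an $\varepsilon$ with $0<\varepsilon<\delta/2$ and invoke the definitions: there exists $n_0$ such that for every $n\geq n_0$,
\begin{equation*}
    t(n,4,3) \leq \bigl(\pi(K_4^3)+\varepsilon\bigr)\binom{n}{3}, \qquad t(n,K_5^{3-}) \geq \bigl(\pi(K_5^{3-})-\varepsilon\bigr)\binom{n}{3},
\end{equation*}
the latter being witnessed explicitly (up to divisibility by $9$) by the blow-up construction $G(n)$ from the previous theorem. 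Combining these two inequalities gives $t(n,K_5^{3-}) - t(n,4,3) \geq (\delta - 2\varepsilon)\binom{n}{3}$, which exceeds $1$ for all sufficiently large $n$.

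Then I would conclude as follows. For such $n$, pick a $K_5^{3-}$-free hypergraph $G$ on $n$ vertices with $t(n,K_5^{3-})$ edges (for $n$ not divisible by $9$ one can pad $G(n')$ with isolated vertices without introducing any copy of $K_5^{3-}$, losing only a lower-order term in the edge count). Since deleting edges cannot create a copy of $K_5^{3-}$, we may remove edges from $G$ one at a time until exactly $t(n,4,3)+1$ remain, obtaining a hypergraph $G'$ with $t(n,4,3)+1$ edges that still contains no $K_5^{3-}$. This $G'$ is a counterexample to Problem \ref{pr2} for $s=4$, which proves the corollary.

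There is essentially no obstacle here beyond verifying the inequality $46/81 > 0.561666$ numerically and being careful with the divisibility condition on $n$ in the construction of $G(n)$; both are routine. The real content of the corollary is contained in the lower-bound construction of the preceding theorem and in Razborov's flag-algebra upper bound, so the proof is just a density comparison plus a monotonicity remark.
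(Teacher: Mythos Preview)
Your proof is correct and follows exactly the paper's approach: the corollary is deduced immediately from the numerical inequality $\pi(K_5^{3-})\geq 46/81 > 0.561666 \geq \pi(K_4^3)$, and your write-up simply unpacks this density comparison in more detail than the paper does. The parenthetical about padding $G(n')$ with isolated vertices is unnecessary---an extremal $K_5^{3-}$-free hypergraph on $n$ vertices exists by definition of $t(n,K_5^{3-})$---but it does no harm.
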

We believe that similar constructions will be possible for small values of $s$ and make the following conjecture,
\begin{conjecture}
	$\pi(K^3_s)<\pi(K_{s+1}^{3-})$, for $s=5,6$ 
\end{conjecture}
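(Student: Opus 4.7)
The plan is to follow the template of the $s=4$ proof: construct, for each $s \in \{5,6\}$, a $K_{s+1}^{3-}$-free hypergraph $G_s(n)$ whose edge density exceeds the best known upper bound for $\pi(K_s^3)$. The ingredients would be a small \emph{base} hypergraph $H^*_s$, a weight vector prescribing blob sizes in a blow-up $H^*_s(n)$, and an auxiliary directed graph $D_s$ encoding additional edges across blobs in the spirit of $G_0(n)$ from the $s=4$ proof. The final comparison step is of the same type as the one used to deduce the case $s=4$: use the Razborov-style flag-algebra bounds (now computable via the freely available software cited in \cite{BT11}) to get a sufficiently sharp upper bound on $\pi(K_s^3)$, and check that the construction beats it.

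First I would search for a small hypergraph $H^*_s$ whose suitably weighted blow-up contains no $K_{s+1}^{3-}$. The reason $H_3$ works for $s=4$ is that every 5-vertex set in $H_3(n)$ either places too many vertices in a single blob, or else has a projection hitting one of the two missing triples, and each option produces at least two non-edges. For $s=5,6$ the target $K_{s+1}^{3-}$ tolerates one non-edge, so the analogous analysis must still force at least two non-edges in every $(s+1)$-set of the blow-up. This will likely require either a base hypergraph on more than $2s-1$ vertices or a more intricate set of missing triples than just two, positioned so that the weighted counting still rules out a single-non-edge configuration for every distribution pattern of the $s+1$ chosen vertices among the blobs.

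Once an appropriate $H^*_s$ is chosen, I would optimize jointly over the blob weights and the arc set of $D_s$. Each arc of $D_s$ contributes a monomial in the weights to the density, and the total density is a polynomial that can be maximized subject to the constraints imposed by $K_{s+1}^{3-}$-freeness. The verification step is then the direct analogue of the two cases (three-in-a-blob and two-plus-two-in-blobs) treated at the end of the $s=4$ proof, extended by the further possibilities that arise when $s+1 \in \{6,7\}$ vertices are split among the blobs.

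The main obstacle will be quantitative. In the $s=4$ case the lower bound $46/81$ exceeds Razborov's upper bound on $\pi(K_4^3)$ by only about $0.006$. For $s=5,6$ the best flag-algebra upper bounds on $\pi(K_s^3)$ are already comparatively close to the conjectured Tur\'an values, so a candidate construction must cross an even narrower gap. A secondary difficulty is the combinatorial explosion of the freeness case analysis as $s$ grows, which will likely force both the search for $H^*_s$ and $D_s$ and the final verification to be carried out by computer rather than by hand as in the $s=4$ argument.
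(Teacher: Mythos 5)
This statement is a \emph{conjecture} in the paper, not a theorem: the paper offers no proof of it, only the precedent of the $s=4$ case as motivation. Your ``proposal'' is likewise not a proof but a research plan. It accurately identifies the template the paper used for $s=4$ (a blown-up base hypergraph plus a directed-graph layer, compared against a flag-algebra upper bound on $\pi(K_s^3)$) and the obstacles to extending it, but it does not produce the required ingredients: no candidate base hypergraph $H^*_s$, no weight vector, no auxiliary digraph $D_s$, no verification of $K_{s+1}^{3-}$-freeness, and no density computation that actually exceeds a known upper bound for $\pi(K_5^3)$ or $\pi(K_6^3)$.

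The genuine gap is therefore everything beyond the strategy statement. Until you exhibit a concrete $K_{s+1}^{3-}$-free construction whose density provably crosses a rigorous upper bound on $\pi(K_s^3)$, the inequality remains unestablished. Note also a structural point your plan glosses over: in the $s=4$ argument, $K_5^{3-}$-freeness of $G(n)$ is forced because \emph{every} pattern of distributing 5 vertices among blobs yields at least two non-edges. For $s=5,6$ you correctly observe the analysis must still yield at least two non-edges in every $(s+1)$-set, but you give no reason any particular base hypergraph achieves this while keeping the density high; these two requirements pull in opposite directions, and resolving the tension is precisely the content the conjecture is asking for.
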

However, for large $s$ the two densities might coincide. In particular it would be interesting to settle the following question
\begin{problem}
	Is $\pi(K^3_7)=\pi(K_8^{3-})$?
\end{problem}


\end{document}